\theoremstyle{plain} 
\newtheorem{lemma}[equation]{Lemma} 
\newtheorem{theorem}[equation]{Theorem}
\theoremstyle{definition}
\newtheorem{definition}[equation]{Definition} 
\theoremstyle{remark}
\newtheorem{remark}[equation]{Remark}
\numberwithin{equation}{section}
\newcommand{\intav}{-\!\!\!\!\!\!\int}
\def\dint{\displaystyle\int}
\def\gs{\gtrsim}
\title[Compactness of commutator in the two weight setting] {Compactness of commutator of Riesz transforms\\ in the two weight setting}
\subjclass[2000]{Primary: 42B20 Secondary: 42B25, 42B35}
\keywords{Commutator, two weight, compactness, Riesz transform}
\author{Michael Lacey and Ji Li}   
\address{ School of Mathematics, Georgia Institute of Technology, Atlanta GA 30332, USA}
\email {lacey@math.gatech.edu}
\thanks{The first author is a 2020 Simons Fellow. Research supported in part by grant  from the US National Science Foundation, DMS-1949206}
\address{ Department of Mathematics, Macquarie University, NSW 2109, Australia}
\email {ji.li@mq.edu.au}
\thanks{The second author is supported by ARC DP 170101060}
\begin{document}

\begin{abstract}
We characterize the compactness of commutators in the Bloom setting. Namely, for a suitably non-degenerate Calder\'on--Zygmund operator $ T$, 
 and a pair of weights $ \sigma , \omega  \in A_p$,  the commutator $ [T, b]$ is compact from $ L ^{p} (\sigma ) \to L ^{p} (\omega )$ 
 if and only if $ b \in VMO _{\nu }$, where $ \nu = (\sigma / \omega ) ^{1/p}$.   This extends the work of the first author, Holmes and Wick.  
 The weighted $VMO$ spaces  are different from the classical $ VMO$ space. In dimension $ d =1$,  compactly supported and smooth functions are dense in $ VMO _{\nu }$, but this need not hold in dimensions $ d \geq 2$.  Moreover, the commutator in the product setting with respect to little VMO space is also investigated. 
\end{abstract}

	\maketitle  

\section{Introduction} 

Let $\sigma $ be a weight on $\mathbb R^d$, i.e. a function that is positive almost everywhere and is locally integrable. For $1<p<\infty$, define $L^p(\sigma)$ to be the space of functions $f$ satisfying 
$\|f\|_{L^p(\sigma)}:= \big(\int_{\mathbb R^d} |f(x)|^p \sigma(x)dx\big)^{1/p}<\infty$.

In \cite{B}, Bloom considered the behavior of the commutator $[b,H]: L^p(\sigma)\mapsto L^p(\omega)$, where $H$ is the Hilbert transform on $\mathbb R$. When $\sigma=\omega=1$, it is well-known that the boundedness of $[b,H]$ is characterized by $b\in BMO(\mathbb R)$ \cite{CRW,N}, and the compactness of $[b,H]$ is characterized by $b\in VMO(\mathbb R)$ \cite{U}. Bloom worked out the setting for $\sigma, \omega\in A_p(\mathbb R)$ and $\sigma\not=\omega$. He showed that for $ 1< p < \infty $,
two weights $  \omega , \sigma  \in A_p$ and $ \nu = (\sigma /\omega ) ^{1/p}$, $[b,H]$ is bounded from $ L^p(\sigma)$ to $L^p(\omega)$ if and only if the symbol $b$ is in the weighted BMO space $BMO_\nu(\mathbb R)$ (for the definitions of $A_p$ weight and $BMO_\nu(\mathbb R)$, see section 2). Recently, Holmes, Wick and the first author \cite{HLW} established this characterization of two-weight boundedness for the commutator of Riesz transforms $[b,R_j]$ in $\mathbb R^d$, $d\geq2$, $j=1,\ldots,d$, using a new method via representation formula from Hyt\"onen and decomposition via paraproducts. It was further studied by Lerner--Ombrosi--Rivera-R\'ios \cite{LOR} via sparse domination, and by Hyt\"onen \cite{H2} via a new weak factorization technique (comparing to \cite{U1}).

However, characterization of the two-weight compactness of $[b,R_j]$, $j=1,\ldots,d$, ($[b,H]$ in dimension 1) is missing up to now. We fill in
this gap as follows.

\begin{theorem}\label{t:Main}  Suppose $ 1< p < \infty $,
two weights $  \lambda  , \sigma  \in A_p$ and $ \nu = (\sigma /\lambda  ) ^{1/p}$. Suppose that $ b\in BMO _{\nu } (\mathbb R^d )$, $R_j$ is the $j$-th Riesz transform on $\mathbb R^d$, $j=1,2,\ldots,d$. 
Then we obtain that $ b\in VMO _{\nu } (\mathbb R^d )$ if and only if the commutator
\ $[b,R_j]:  L ^{p} (\sigma )  \to  L ^{p} (\lambda  ) $\  is compact.
\end{theorem}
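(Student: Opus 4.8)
The plan is to prove the two implications separately, using the known two-weight boundedness $[b,R_j]\colon L^p(\sigma)\to L^p(\lambda)$ (valid since $b\in BMO_\nu$) as the starting point in both directions.

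For the \emph{sufficiency} direction ($b\in VMO_\nu \Rightarrow$ compactness), I would first reduce to the case of a \emph{nice} symbol by density: show that $BMO_\nu$-nice approximants of $b$ (functions that are, say, bounded, supported in a fixed ball, and/or smooth — whatever the correct analogue of $C^\infty_c$ is in the $\nu$-weighted setting, as flagged in the abstract) are $BMO_\nu$-dense in $VMO_\nu$, so that $\|b-b_k\|_{BMO_\nu}\to 0$. By the two-weight boundedness of Holmes--Lacey--Wick, $\|[b-b_k,R_j]\|_{L^p(\sigma)\to L^p(\lambda)}\lesssim \|b-b_k\|_{BMO_\nu}\to 0$, so since the compact operators form a norm-closed subspace it suffices to prove $[b_k,R_j]$ is compact for each fixed nice $b_k$. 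For a fixed smooth, compactly supported, bounded symbol one establishes compactness by the classical Fréchet--Kolmogorov / Riesz--Kolmogorov criterion adapted to the weighted spaces: one must verify (i) uniform control of the tails $\int_{|x|>A}|[b_k,R_j]f|^p\lambda\,dx$ small uniformly over the unit ball of $L^p(\sigma)$, (ii) equicontinuity of translates, and (iii) smallness of the truncation near the singularity. Here the kernel smoothness of $R_j$, the compact support and regularity of $b_k$, and the $A_p$ hypotheses on $\sigma,\lambda$ (to run weighted maximal function bounds) are what make each estimate go through. The caveat the abstract raises — that $C^\infty_c$ need not be dense in $VMO_\nu$ for $d\ge 2$ — means I must be careful to use the \emph{correct} dense subclass, which may be "locally smooth with a weighted decay condition" rather than compactly supported; the compactness proof for such approximants then needs the tail estimate to exploit the decay built into $VMO_\nu$ rather than literal compact support.

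For the \emph{necessity} direction (compactness $\Rightarrow b\in VMO_\nu$), I would argue by contradiction combined with a "no loss of mass to infinity / at small scales / at large scales" argument. Since $b\in BMO_\nu$ is already assumed, membership in $VMO_\nu$ fails only if one of the three VMO conditions is violated: there is a sequence of balls $Q_k$ with $|Q_k|\to 0$, or $|Q_k|\to\infty$, or $Q_k$ escaping to spatial infinity, along which the (weighted) mean oscillation $\frac{1}{\nu(Q_k)}\int_{Q_k}|b-b_{Q_k}|\,dx$ stays bounded below by some $\delta>0$. In each case I would build, from the test-function constructions used to prove the \emph{lower} bound in the boundedness theorem (the weak-factorization / paraproduct lower bound of Hytönen or of Holmes--Lacey--Wick), a normalized sequence $f_k\in L^p(\sigma)$, $\|f_k\|_{L^p(\sigma)}\approx 1$, localized to (a dilate of) $Q_k$, such that $\|[b,R_j]f_k\|_{L^p(\lambda)}\gtrsim \delta$. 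Because the $Q_k$ degenerate in scale or run off to infinity, one can pass to a subsequence making the $f_k$ "spread out" — concretely, $f_k\rightharpoonup 0$ weakly in $L^p(\sigma)$ (using $1<p<\infty$ and reflexivity, together with the localization) — and then compactness of $[b,R_j]$ forces $\|[b,R_j]f_k\|_{L^p(\lambda)}\to 0$, contradicting the lower bound. This is the standard mechanism (Uchiyama's argument, in the two-weight guise) and the Riesz-transform structure enters through the explicit form of the lower-bound test functions, for which the media $R_j$ have a nonvanishing symbol $\xi_j/|\xi|$ in a suitable cone.

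The main obstacle I anticipate is the \emph{density/structure} of $VMO_\nu$ in dimension $d\ge 2$: because the weight $\nu$ is not doubling in a way compatible with a single Euclidean scale, the naive claim "$C^\infty_c$ is dense" fails, so the sufficiency proof cannot simply quote the unweighted Uchiyama reduction. I would therefore need to identify precisely the right self-improving subclass of $VMO_\nu$ — one genuinely dense in $BMO_\nu$-norm — and prove a bespoke Fréchet--Kolmogorov-type compactness statement for operators $[b,R_j]$ with such symbols, tracking the interaction between the weight $\nu$, the $A_p$ constants of $\sigma$ and $\lambda$, and the three scales (small balls, large balls, balls at infinity). Getting the tail and equicontinuity estimates uniform over the unit ball of $L^p(\sigma)$, with constants depending only on $[\sigma]_{A_p}$, $[\lambda]_{A_p}$ and the modulus of the symbol, is where the real work will lie.
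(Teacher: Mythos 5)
Your necessity argument is essentially the paper's: assume one of the three $VMO_\nu$ limits fails along a sequence of balls, build normalized test functions supported near those balls whose images under $[b,R_j]$ have $L^p(\lambda)$-norm bounded below, and contradict compactness. The paper realizes this with a median-value construction on a companion ball $\widetilde B_j$ (using non-degeneracy of the kernel) rather than the weak-factorization lower bound, and it closes the contradiction by summing a disjointly supported subsequence against an $\ell^{p'}\setminus\ell^1$ coefficient sequence instead of invoking weak convergence to zero; but these are interchangeable implementations of the same Uchiyama mechanism. One detail you should not gloss over: in the small-scale case the balls may accumulate at a point, so the disjointness of the supports of the $f_k$ (which is what makes them weakly null) has to be arranged by hand — the paper passes to a subsequence with $4r_{j_{i+1}}\le r_{j_i}$ and removes $\bigcup_{\ell>j}\widetilde B_\ell$ from each test set, keeping a fixed fraction of the measure.

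The sufficiency direction, however, has a genuine gap. Your plan rests entirely on approximating $b$ in $BMO_\nu$-norm by symbols from some "nice" dense subclass and then proving compactness of $[b_k,R_j]$ for each approximant via a Fr\'echet--Kolmogorov criterion. You correctly identify that $C^\infty_0$ is not the right class for $d\ge 2$ (the paper exhibits a function in $C_0^\infty(\mathbb R^2)$ that is not even in $BMO_w$ for a suitable $w\in A_2$), but you never produce the substitute class, and the paper does not either — it avoids the density route altogether. The paper's actual argument decomposes the \emph{operator}, not the symbol: write $K=K_0+K_1+K_2+K_3$, where $K_0$ lives near the diagonal ($|x-y|<\eta$), $K_1$ far from it ($|x-y|>1/\eta$), $K_2$ where $|x|$ or $|y|>2/\eta$, and $K_3$ is supported in a bounded region away from the diagonal. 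The three VMO limits (small scales, large scales, balls far from the origin) show, via the known two-weight upper bound, that $\|[b,T_j]\|_{L^p(\sigma)\to L^p(\lambda)}$, $j=0,1,2$, is small for $\eta$ small, while $[b,T_3]$ is compact because $T_3$ has a bounded, compactly supported kernel and can be approximated by finite sums of smooth bump kernels $K_{P,Q}$. Without this (or some concrete replacement for the missing density theorem), your sufficiency proof cannot be completed as written.
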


Some approaches to this Theorem will not succeed.  For instance,    Uchiyama \cite{U1}  used the fact that   $VMO(\mathbb R^d) $ is the closure of $C_0^\infty(\mathbb R^d)$ (smooth functions with compact support) under the $BMO(\mathbb R^d)$ norm. However, this is not necessarily true in the two weight setting when $d\geq2$, see \S\ref{s:VMO}.  
Another recent argument of Hyt\"onen  \cite{H3} shows that compactness can be extrapolated, recovering compactness of commutators in the 
one weight setting.  
It would be interesting to extend that argument to the two weight setting.

The sufficiency of Theorem \ref{t:Main} holds more generally. We note that this sufficiency argument holds not just for Riesz transforms but also for general Calder\'on--Zygmund operators $T$ with the associated kernel $K(x,y)$. We formulate it as follows.
Recall that a Calder\'on-Zygmund  operator $T$ (bounded on $L^2(\mathbb R^d)$) associated to a $\delta$-standard kernel $K(x, y)$ is an integral
operator defined initially on $f\in C_0^\infty(\mathbb R^d)$:
$$ T(f)(x):=\int_{\mathbb R^d} K(x,y) f(y)dy,\quad x\not\in {\rm supp} f, $$
where $K(x,y)$ satisfies the size and smoothness estimates
\begin{align}
&|K(x,y)|\leq {C\over |x-y|^d};\\
&|K(x + h, y) - K(x, y)| + |K(x, y + h)-K(x, y)| \leq C{ |h|^\delta\over |x-y|^{n+\delta}}\label{smooth}
\end{align}
for all $|x-y| > 2 |h| > 0$ and a fixed $\delta\in(0, 1]$.

\begin{theorem}\label{t:Compact suff} Suppose $ T$ is a Calder\'on--Zygmund operator as above,  $ 1< p < \infty $, 
two weights $  \lambda  , \sigma  \in A_p$ and $ \nu = (\sigma /\lambda  ) ^{1/p}$,  and $ b\in VMO _{\nu }(\mathbb R^d)$.  Then the commutator  $ [b,T] $ 
is compact from $ L ^{p} (\sigma ) $ to $ L ^{p} (\lambda  )$. 

\end{theorem}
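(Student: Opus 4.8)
The plan is to realize $[b,T]$ as a limit, in the operator norm of $\mathcal B(L^p(\sigma),L^p(\lambda))$, of compact operators. The sole external input is the two‑weight \emph{boundedness} of the commutator: exactly as in \cite{B, HLW} (and, for a general Calder\'on--Zygmund $T$, via Hyt\"onen's dyadic representation theorem), one has
\begin{equation}\label{e:bdd}
 \|[g,T]\|_{L^p(\sigma)\to L^p(\lambda)}\;\lesssim\;\|g\|_{BMO_\nu},\qquad g\in BMO_\nu .
\end{equation}
Since $[b,T]-[g,T]=[b-g,T]$, it suffices to produce, for every $\varepsilon>0$, a compact operator within $\varepsilon$ of $[b,T]$. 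As noted after Theorem~\ref{t:Main}, in dimension $d\ge 2$ we cannot in general choose $g\in C_0^\infty$ with $\|b-g\|_{BMO_\nu}<\varepsilon$; so instead of regularizing the symbol I would truncate the \emph{Schwartz kernel} $(b(x)-b(y))K(x,y)$ of the commutator, guided by the three defining limits of $VMO_\nu$. Writing $\langle b\rangle_Q=\frac1{|Q|}\int_Q b$, $\nu(Q)=\int_Q\nu$ and $\mathcal O(b;Q)=\frac1{\nu(Q)}\int_Q|b-\langle b\rangle_Q|\,dx$, membership $b\in VMO_\nu$ means precisely that $\mathcal O(b;Q)\to 0$ as $\ell(Q)\to 0$, as $\ell(Q)\to\infty$, and as the centre $c_Q$ of $Q$ tends to infinity.

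Fix a small $r>0$ and a large $N>0$ and split the kernel of $[b,T]$ as $\mathrm A+\mathrm B+\mathrm C+\mathrm D$, where $\mathrm A$ is its restriction to $\{|x-y|<r\}$ (interactions at small scales), $\mathrm B$ to $\{|x-y|\ge N\}$ (interactions at large scales), $\mathrm C$ to $\{r\le|x-y|<N\}\cap\{\max(|x|,|y|)\ge 3N\}$ (intermediate scales, far from the origin), and $\mathrm D$ to $\{r\le|x-y|<N\}\cap\{|x|<3N,\ |y|<3N\}$ (a set that is bounded and bounded away from the diagonal). On the last region the kernel $K$ is continuous (by the smoothness estimate \eqref{smooth}) and $b\in L^1_{\mathrm{loc}}$; writing the operator with kernel $\mathrm D$ as $f\mapsto b\cdot(T_{r,N}f)-T_{r,N}(bf)$, where $T_{r,N}$ is the operator whose continuous, compactly supported kernel is $K(x,y)$ times the indicator of $\mathrm D$'s region, one sees by Arzel\`a--Ascoli that $T_{r,N}$ maps $L^p(\sigma)$ (and $L^1$ on a fixed ball) compactly into $C(\overline{B(0,3N)})$, using \eqref{smooth} and $\sigma^{-1/(p-1)}\in L^1_{\mathrm{loc}}$; since multiplication by the fixed $b$ is continuous from $C(\overline{B(0,3N)})$ into $L^p(\lambda)$, and $bf\in L^1_{\mathrm{loc}}$ for $f\in L^p(\sigma)$ — both by the local integrability of $BMO_\nu$ functions against the weights (weighted John--Nirenberg) — the operator with kernel $\mathrm D$ is compact from $L^p(\sigma)$ to $L^p(\lambda)$.

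It remains to bound the operator norms of $\mathrm A$, $\mathrm B$, $\mathrm C$. The claim is that each is dominated by a \emph{restricted} oscillation of $b$: $\|\mathrm A\|\lesssim\sup\{\mathcal O(b;Q):\ell(Q)\lesssim r\}$; $\|\mathrm B\|\lesssim\big(\sum_{2^k\ge N}2^{-k\delta}\big)\,\sup\{\mathcal O(b;Q):\ell(Q)\gtrsim N\}$, the first factor coming from the kernel decay; and $\|\mathrm C\|\lesssim\sup\{\mathcal O(b;Q):|c_Q|\gtrsim N\}$, because every $(x,y)$ in the support of $\mathrm C$ lies in a cube of side $\lesssim N$ centred at distance $\gtrsim N$ from the origin. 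By the three limits characterizing $VMO_\nu$, these right‑hand sides tend to $0$ as $r\to0$ and $N\to\infty$; combined with the previous paragraph this exhibits $[b,T]$ as an operator‑norm limit of compact operators, hence compact.

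The substantive point — and the step I expect to be the main obstacle — is these three ``localized boundedness'' estimates, i.e.\ the refinement of \eqref{e:bdd} in which the right‑hand side only sees cubes of the relevant scale or location. I would obtain them by reopening the proof of \eqref{e:bdd}: expand $T$ by Hyt\"onen's representation into an average of dyadic shifts, expand the commutator of $b$ with a shift of complexity $(i,j)$ into paraproduct‑type operators, and observe that the bound for each such operator involves only the numbers $\mathcal O(b;Q)$ over dyadic cubes $Q$ whose side length is comparable — up to the shift complexity — to the cube on which the shift acts, together with whether those $Q$ meet a prescribed ball; restricting the shift sum to the relevant scales or locations then yields the three estimates. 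The care required is in summing over scales and complexities and in keeping the spatial localization compatible with the $\ell^2$‑valued (or sparse‑form) bounds underlying \eqref{e:bdd}; no harmonic‑analytic input beyond that which proves \eqref{e:bdd} is needed. (Equivalently, one may verify the Fr\'echet--Kolmogorov precompactness criterion directly for $\{[b,T]f:\|f\|_{L^p(\sigma)}\le1\}$, after conjugating the problem to $L^p(\mathbb R^d)$ by the weights; its hypotheses — boundedness, uniform decay at infinity, equicontinuity — correspond precisely to controlling the pieces $\mathrm A$, $\mathrm B$, $\mathrm C$ above.)
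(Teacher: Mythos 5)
Your proposal is correct and follows essentially the same route as the paper: the same four-fold kernel decomposition (small scales, large scales, intermediate scales far from the origin, and a bounded remainder off the diagonal), with the first three pieces controlled by localized versions of the two-weight upper bound of Holmes--Lacey--Wick keyed to the three defining limits of $VMO_\nu$, and the remainder shown compact by approximation with continuous compactly supported kernels. The only cosmetic difference is that you invoke Arzel\`a--Ascoli for the remainder where the paper approximates by finitely many smooth localized kernels $C_{P,Q}$; both rest on the same observation.
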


The main idea of proving this argument is to  split $ [b,T] $ into two parts,  $ A$ and $B$, where the norm of $ A$ is at most $ \epsilon $, and $ B$ is a compact 
operator. 


Next we provide the argument for the necessity  of Theorem \ref{t:Main}. We note that this necessity argument holds for general operators with the non-degenerate condition on the kernel (formulated in \cite{H}). We state it as follows.

We say that the operator 
$T$ satisfies the \emph{non-degenerate condition}  if:
 {\it there exist positive constants $c_0$ and $ C_0$ such that for every $x\in \mathbb R^d$ and $r>0$, there exists $y\in B(x,  C_0 r)\backslash B(x,r)$  for which the kernel $ K (x,y)$ satisfies 
\begin{align}\label{e-assump cz ker low bdd weak}
|K(x, y)|\geq \frac1{c_0r^d}.
\end{align}
}

\begin{theorem}\label{t:Compact nece} Suppose $ 1< p < \infty $, two weights $  \lambda  , \sigma  \in A_p$ and $ \nu = (\sigma /\lambda  ) ^{1/p}$, $ b\in BMO _{\nu }(\mathbb R^d)$. Suppose that
$ T$  
satisfies the non-degenerate condition and that $[b,T]$ is compact from $ L ^{p} (\sigma ) $ to $ L ^{p} (\lambda  )$. Then $ b\in VMO _{\nu }(\mathbb R^d)$. 

\end{theorem}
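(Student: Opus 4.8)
The plan is to prove the contrapositive: assuming $b \in BMO_\nu$ but $b \notin VMO_\nu$, I would construct a sequence of normalized functions in $L^p(\sigma)$ on which $[b,T]$ cannot be made small, contradicting compactness. Recall that $VMO_\nu$ is the subspace of $BMO_\nu$ for which the weighted mean oscillation over cubes $Q$, namely $\frac{1}{\nu(Q)}\int_Q |b - \langle b\rangle_Q|\,dx$ (or the appropriate Bloom-weighted average), tends to zero in the three standard regimes: as $|Q| \to 0$, as $|Q| \to \infty$, and as $Q$ escapes to infinity. So $b \notin VMO_\nu$ means there is $\varepsilon_0 > 0$ and a sequence of cubes $Q_k$ realizing one of these three failure modes with oscillation bounded below by $\varepsilon_0$. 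First I would fix such a sequence $\{Q_k\}$, passing to a subsequence so that the cubes are either shrinking, expanding, or running off to infinity, and in the latter two cases I would further thin them out to be pairwise "well-separated" (e.g. $3Q_k$ disjoint, or nested with huge ratios) — this disjointness is what ultimately forces non-compactness, since a compact operator maps weakly-null sequences to norm-null sequences.

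The core mechanism is a lower bound for $[b,T]$ against suitable test functions, exploiting the non-degenerate condition \eqref{e-assump cz ker low bdd weak}. For each $Q_k$ with center $x_k$ and side $r_k$, the hypothesis gives a point $y_k \in B(x_k, C_0 r_k)\setminus B(x_k, r_k)$ with $|K(x,y)| \gtrsim r_k^{-d}$ for $x$ near $x_k$; by continuity of $K$ away from the diagonal and the smoothness estimate \eqref{smooth}, this lower bound persists, with the same sign, on a pair of cubes $Q_k' \ni x_k$ and $Q_k'' \ni y_k$ of comparable size $\sim r_k$ at distance $\sim r_k$. I would then take $f_k = (b - c_k)\mathbf 1_{Q_k''}\,\sigma^{-1}$-type test functions (more precisely, normalized in $L^p(\sigma)$: $f_k = \mathbf 1_{Q_k''}$ scaled by $\sigma(Q_k'')^{-1/p}$, or a variant adapted to the oscillation), and estimate $\|[b,T]f_k\|_{L^p(\lambda)}$ from below on $Q_k'$. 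Writing $[b,T]f_k(x) = \int K(x,y)(b(x)-b(y))f_k(y)\,dy$, the sign-definiteness of $K$ on $Q_k'\times Q_k''$ lets me pull the oscillation of $b$ out: the integral is bounded below by $r_k^{-d}\int_{Q_k''}|b(x) - \langle b\rangle_{Q_k''}|\,dy$ minus error terms, and integrating $|\cdot|^p\,\lambda$ over $Q_k'$ and using the Bloom/$A_p$ relation $\lambda(Q_k')\sigma(Q_k'')/|Q_k|^{2} \gtrsim$ (appropriate power of $\nu$), one converts this into the weighted oscillation of $b$, which is $\gtrsim \varepsilon_0$ by the choice of $Q_k$. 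Thus $\|[b,T]f_k\|_{L^p(\lambda)} \gtrsim \varepsilon_0$ uniformly in $k$, while $\|f_k\|_{L^p(\sigma)} = 1$.

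To complete the contradiction I would show $f_k \rightharpoonup 0$ weakly in $L^p(\sigma)$: in the shrinking and escaping-to-infinity regimes this is immediate from disjointness of supports against any fixed $L^{p'}(\sigma)$ function; in the expanding regime one normalizes so that $f_k \to 0$ pointwise while staying bounded, again yielding weak convergence. Since $[b,T]$ is assumed compact, $[b,T]f_k \to 0$ in $L^p(\lambda)$-norm, contradicting the uniform lower bound $\gtrsim \varepsilon_0$. Hence $b \in VMO_\nu$.

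I expect the main obstacle to be the three-regime bookkeeping combined with controlling the error terms in the lower bound — specifically, ensuring that the "off-diagonal" contributions to $[b,T]f_k$ (the part of the integral where $|b(x)-b(y)|$ is small, or where $x$ is not in the good cube $Q_k'$) and the contribution of $b$'s oscillation on $Q_k'$ itself do not swallow the main term $\varepsilon_0$. This requires carefully choosing the test functions adapted to where $b$ genuinely oscillates on $Q_k$ (rather than just $\mathbf 1_{Q_k''}$), likely subtracting the median or mean of $b$ and splitting $Q_k$ into a "large" and "small" part relative to that value à la the classical Uchiyama argument, and then tracking how the $A_p$ constants of $\sigma, \lambda$ and the weight $\nu$ enter each estimate uniformly in $k$.
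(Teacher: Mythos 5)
Your proposal follows essentially the same route as the paper's proof: selecting a sequence of balls witnessing the failure of one of the three $VMO_\nu$ conditions, using the non-degeneracy to produce companion balls where the kernel is sign-definite with lower bound $\gtrsim r_j^{-d}$, splitting via the median of $b$ \`a la Uchiyama to make $b(x)-b(y)$ sign-definite, and building disjointly supported $L^p(\sigma)$-normalized test functions on which $\|[b,T]f_j\|_{L^p(\lambda)}\gtrsim \delta_0$. The only (cosmetic) difference is the concluding step: you invoke directly that a compact operator sends the weakly null sequence $f_j$ to a norm-null sequence, whereas the paper extracts a limit $\phi$ of $[b,T]f_{j_i}$ and derives a contradiction by summing against a sequence in $\ell^{p'}\setminus\ell^1$; both rest on the same underlying fact.
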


The main idea of proving this argument  is to  seek a contradiction, which,  in its simplest form, is that there is no bounded operator $T \;:\; \ell^{p} (\mathbb N) \to \ell^{p} (\mathbb N)$ with $Te_{j } = T e_{k} \neq 0$ for all $j,k\in \mathbb N$.  Here, $e_{j}$ is the 
standard basis for $\ell^{p} (\mathbb N)$.  
Thus,
the main step is to construct  norm one, disjointly supported functions  $\{g_{j}\} \subset L ^{p} (\sigma )$ such that 
$  [b, T] g_{j} \approx  \phi  \neq 0 $.  

\begin{remark}
We point out that if $T$ is a Calder\'on--Zygmund operator  on $ L ^2 (\mathbb R^d )$ and $T$ satisfies the non-degenerate condition, then we obtain that there exist absolute  constants $3\le A_1\le A_2$ such that for any ball $B=B(x_0,r)$, there is another ball $\widetilde B:=B(y_0, r)$
such that $A_1 r\le |x- y|\le A_2 r $, 
and  for all $(x,y)\in ( B\times \widetilde{B})$, $K(x, y)$ does not change sign and \eqref{e-assump cz ker low bdd weak} holds. In fact, this argument is enough for us to deduce the above theorem.
\end{remark}

\begin{remark}
We also point out that our result and proof hold in a more general setting: spaces of homogeneous type, to cover many examples of Calder\'on--Zygmund operators beyond the Euclidean setting.
\end{remark}

We now study the weighted VMO space $VMO_\nu(\mathbb R^d)$, $\nu\in A_2$, which is of independent interest with the unweighted case known around 40 years ago.
We show that $VMO_\nu(\mathbb R^d)$ has totally different properties between $d=1$ and $d>1$.
\begin{theorem}\label{t:VMO}  
$VMO_\nu(\mathbb R)$ is the closure of $C_0^\infty(\mathbb R)$ under the $BMO_\nu(\mathbb R)$ norm.  
However, this is not necessarily true when $d\geq2$.
\end{theorem}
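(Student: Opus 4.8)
The plan is to treat the two assertions separately, since they require essentially opposite techniques. For the positive statement in dimension $d=1$, I would first recall the definition of $BMO_\nu(\mathbb R)$ and $VMO_\nu(\mathbb R)$: membership in $VMO_\nu$ means that the ``oscillation quotient'' $\frac{1}{\nu(I)}\int_I |b - \langle b\rangle_I|\,dx$ vanishes in the three usual limiting regimes (small intervals, large intervals, and intervals escaping to infinity). The key tool is a mollification-plus-truncation scheme: given $b \in VMO_\nu(\mathbb R)$, set $b_\epsilon = (b \ast \varphi_\epsilon)\chi_\epsilon$, where $\varphi_\epsilon$ is a standard mollifier and $\chi_\epsilon$ is a smooth cutoff to $[-1/\epsilon, 1/\epsilon]$. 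The point is to show $\|b - b_\epsilon\|_{BMO_\nu(\mathbb R)} \to 0$. Mollification is harmless for the small-scale regime because convolution is an average and $BMO_\nu$ behaves well under averaging; the large-scale and at-infinity regimes are controlled by the cutoff, where one crucially uses that in dimension one a weighted $BMO_\nu$ function over a large or distant interval has controlled mean-value growth, so that cutting off only introduces error at a single scale near $\pm 1/\epsilon$, which is itself a $VMO_\nu$-type quantity that tends to zero. The one-dimensional structure enters because $\mathbb R \setminus [-1/\epsilon,1/\epsilon]$ is a disjoint union of just two half-lines, so the interaction between the cutoff and the weight is essentially a boundary term at two points rather than along a hypersurface.

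For the negative statement when $d \geq 2$, I would construct an explicit weight $\nu \in A_2(\mathbb R^d)$ and an explicit $b \in VMO_\nu(\mathbb R^d)$ that cannot be approximated in $BMO_\nu$ by any $C_0^\infty$ function. The natural candidate is a power weight $\nu(x) = |x|^{\alpha}$ (or a slight variant, possibly anisotropic) with $\alpha$ chosen so that $\nu \in A_2$ but $\nu$ is ``large'' near infinity, combined with $b$ equal to a constant $c \neq 0$, or more robustly $b(x) = \log^{+}|x|$ truncated appropriately. The mechanism: because $\nu$ grows, a nonzero constant function has $BMO_\nu$ oscillation that is literally zero (constants have no oscillation), yet it lies in $VMO_\nu$; meanwhile any $g \in C_0^\infty$ differs from a nonzero constant by a function whose $BMO_\nu$ norm is bounded below — the difference $b - g$ equals $c$ outside a compact set, and one shows $\liminf$ of its oscillation quotient over large annuli is bounded away from zero. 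Here the obstruction to the $d=1$ argument is visible: removing the ``tail'' of $b$ forces an oscillation along an entire sphere $\{|x| = R\}$, and in $d \geq 2$ the weighted measure of such a transition region does not become negligible relative to $\nu(B(0,R))$.

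The step I expect to be the main obstacle is making the $d=1$ cutoff estimate fully rigorous: one must show that the single-scale error introduced by multiplying by $\chi_\epsilon$ — roughly, the oscillation of $b$ over intervals straddling $\pm 1/\epsilon$ weighted by $\nu$ — actually tends to zero, and this requires extracting from $b \in VMO_\nu(\mathbb R)$ a quantitative decay of mean oscillation at large scales, then combining it with the $A_p$/$A_2$ doubling of $\nu$ to absorb the overlap between the cutoff transition region and neighboring intervals. A secondary delicate point is verifying that the power weight in the $d \geq 2$ counterexample genuinely lies in $A_2$ while simultaneously being incompatible with $C_0^\infty$ density; one wants $\alpha$ in the open range $(-d, d)$ guaranteeing $A_2$, yet the argument above needs $\nu(\{|x| \sim R\})/\nu(B(0,R))$ to not vanish, which holds for any such $\alpha$ by scaling, so the two requirements are in fact compatible — but this should be checked carefully against the precise definitions of $BMO_\nu$ and $VMO_\nu$ used in the paper, since some formulations normalize by $\nu(I)$ and others by $|I|/\nu(I)^{-1}$-type quantities, and the counterexample must be tuned to whichever convention is in force.
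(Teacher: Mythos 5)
There are two genuine gaps, one in each half of your argument, and the second is fatal to your proposed counterexample.

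For $d=1$, your mollification-plus-truncation scheme is broadly the right skeleton (the paper also mollifies), but you have located the difficulty in the wrong place. The cutoff at $\pm 1/\epsilon$ and the large-scale/at-infinity regimes are the easy part: for any $\nu\in A_2$ one has $\nu(\mathbb R)=\infty$, so oscillations of a compactly supported error over large or distant intervals are automatically negligible. The real crux, which your proposal never addresses, is the inclusion $C_0^\infty(\mathbb R)\subset VMO_\nu(\mathbb R)$ at \emph{small} scales: a smooth compactly supported $b$ has $\int_I|b-b_I|\,dx\lesssim \ell(I)^{2}$ on a short interval $I$, and one must divide this by $\nu(I)$, which for an $A_2$ weight can vanish as fast as $\ell(I)^{1+d'}$ with some $d'<1$ (but no faster). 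The paper isolates exactly this quantitative lower bound $\nu(I)\gtrsim \ell(I)^{1+d'}$ (via a reverse H\"older argument for $\nu^{-1}$), and it is precisely the comparison $2>1+d'$ that makes $d=1$ work. Without this lemma your small-scale step (``convolution is an average and $BMO_\nu$ behaves well under averaging'') does not close, because the issue is not the averaging but whether the weighted denominator can degenerate faster than the numerator.

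For $d\geq 2$, your counterexample is wrong. A nonzero constant $c$ has $BMO_\nu$ seminorm zero, but it \emph{can} be approximated by compactly supported functions: for $g\in C_0^\infty$ the difference $c-g$ equals $c$ off a compact set, so its oscillation vanishes on balls disjoint from $\operatorname{supp}g$, while on large balls $B$ containing $\operatorname{supp}g$ one has $\int_B|(c-g)-(c-g)_B|\leq 2\|g\|_{L^1}$ against $\nu(B)\to\infty$; with a logarithmically slow cutoff the transition region contributes $o(1)$ as well (this is the standard reason constants lie in the classical $CMO=\overline{C_0^\infty}^{BMO}$, consistent with Uchiyama's characterization, which constants satisfy vacuously). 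So your claimed uniform lower bound on $\|c-g\|_{BMO_\nu}$ fails. The actual obstruction in $d\geq2$, and the one the paper exploits, is at small scales and is the exact dual of the $d=1$ crux above: take $\nu(x)=|x|^{d-\epsilon}\in A_2(\mathbb R^d)$, which vanishes at the origin to order almost $d$, and a smooth compactly supported $g$ with a simple zero there (e.g.\ $g(x)=x_1e^{-1/(1-|x|^2)}$ on the unit ball of $\mathbb R^2$). Then $\int_{B_r}|g-g_{B_r}|\approx r^{d+1}$ while $\nu(B_r)\approx r^{2d-\epsilon}$, and since $d+1<2d$ for $d\geq2$ the quotient blows up as $r\to0$. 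Thus $g\notin BMO_\nu(\mathbb R^d)$ at all, so $C_0^\infty$ is not even contained in $BMO_\nu$ and the density statement fails in the strongest sense --- a mechanism entirely different from the one you propose.
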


This paper is organized as follows. In Section \ref{s:pre} we provide the definitions for Muckenhoupt weights, weighted BMO and VMO spaces, and recall the representation theorem \cite{H}. In Section \ref{s:proof} we prove Theorem \ref{t:Main} via showing Theorems \ref{t:Compact suff} and \ref{t:Compact nece}. In Section \ref{s:VMO} we prove Theorem~\ref{t:VMO}. In the last section, we discuss the two weight compactness in the product setting with respect to little VMO spaces.

Throughout this paper, we use the standard notation “$A\lesssim B$” to denote $A \leq C B$ for some positive constant $C$ that depends only on the dimension $d$.


\section{Preliminaries} \label{s:pre}

\begin{definition}
  \label{def:Ap}
  Let $w(x)$ be a nonnegative locally integrable function
  on~$\mathbb R^d$. For $1 < p < \infty$, we
  say $w$ is an $A_p$-\emph{weight}, written $w\in
  A_p(\mathbb R^d)$, if
  \[
    [w]_{A_p}
    := \sup_B \left(\intav_B w\right)
    \left(\intav_B
      \left(\dfrac{1}{w}\right)^{1/(p-1)}\right)^{p-1}
    < \infty.
  \]

\end{definition}

Along the way, we will need different properties of $ A_p$ weights, which we will mention as they are needed.

Next we use $A_p$, $1 <p< \infty$, to denote the Muckenhoupt weighted class on $\mathbb R^d$ (see the precise definition of $A_p$ in Section 2), and the Muckenhoupt--Wheeden weighted BMO on $\mathbb R^d$ is defined as
follows.
\begin{definition}\label{MWbmo}
Suppose $w \in A_\infty$.
A function $b\in L^1_{\rm loc}(\mathbb R^d)$ belongs to
${ BMO}_w(\mathbb R^d)$ if
\begin{equation*}
\|b\|_{{ BMO}_w(\mathbb R^d)}:=\sup_{B}{1\over w(B)}\dint_{B}
|b(x)-b_{B}| \,dx<\infty,
\end{equation*}
where $b_{B}:= {1\over |B|}\int_B b(x)dx$ and the supremum is taken over all balls $B\subset \mathbb R^d$.
\end{definition}
The weighted VMO space on $\mathbb R^d$ is defined as
follows.
\begin{definition}\label{vmo}
Suppose $w \in A_\infty$.
A function $b\in { BMO}_w(\mathbb R^d)$ belongs to
${ VMO}_w(\mathbb R^d)$ if
\begin{align*}
&{\rm (i)}\ \lim_{a\to0}\sup_{B:\  r_B=a}{1\over w(B)}\dint_{B}
|b(x)-b_{B}| \,dx=0,\\
&{\rm (ii)}\ \lim_{a\to\infty}\sup_{B:\  r_B=a}{1\over w(B)}\dint_{B}
|b(x)-b_{B}| \,dx=0,\\
&{\rm (iii)}\ \lim_{a\to\infty}\sup_{B\subset \mathbb R^d\backslash B(x_0,a)}{1\over w(B)}\dint_{B}
|b(x)-b_{B}| \,dx=0,
\end{align*}
where $x_0$ is any fixed point in $\mathbb R^d$.
\end{definition}



\section{Proof of Main result: Theorem \ref{t:Main}} \label{s:proof}

It is clear that Theorem \ref{t:Main} follows from Theorem~\ref{t:Compact suff} and Theorem~\ref{t:Compact nece}.
In what follows, we provide the proofs of these two theorems.

\begin{proof}[Proof of Theorem~\ref{t:Compact suff}] 

This is seen as follows.  Suppose $ b \in VMO _{\nu }(\mathbb R^d)$ with $\|b\|_{BMO _{\nu }(\mathbb R^d)}=1$. 
We show that for any fixed $ 0< \epsilon < 1$, we have   $ [b,T] =  A+B$, where the norm of $ A$ is at most $ \epsilon $, and $ B$ is a compact 
operator. 

Fix $\eta>0$ small enough. Denote the kernel of $ T$ by $ K (x,y)$. Set $ K = \sum_{t=0} ^{3} K_t$, where each $ K_j$ is a Calder\'on--Zygmund kernel, and 
\begin{gather*}
 K_0 (x,y)  = 
 \begin{cases}
 K (x,y),  & 0<\lvert  x-y\rvert  < \eta, 
 \\
 0,   &  \lvert  x-y\rvert > 2\eta,  
 \end{cases}
\\
 K_1 (x,y)  = 
 \begin{cases}
 K (x,y),  & \lvert  x-y\rvert  > { 1/\eta}, 
 \\
 0,   &  \lvert  x-y\rvert < { 1/(2\eta)}
 \end{cases}
 \\
 K_2 (x,y) \neq 0 \quad \implies \quad  \lvert  x\rvert > 2/\eta {\rm\ \  or}\ \ \lvert  y\rvert > 2/\eta,  
 \\
 K _{3} (x,y)  \quad \textup{is supported on $ \lvert x \rvert, \lvert  y\rvert  < 2 / \eta  $} . 
\end{gather*}
Write $ T_j$ for the operator associated to the kernel $ K_j$.  We claim that 
\begin{equation*}
\lVert [b,T_j] \rVert _{L ^{p} (\sigma ) \to L ^{p} (\omega )} \lesssim \epsilon _ \eta , \qquad j=0,1,2,  
\end{equation*}
where $ \epsilon _ \eta $ decreases to zero as $ \eta $ does.  
Consider the case of $ j=0$. The   contribution to the norm estimate above  from $ BMO _{\nu }$ norm only arises from the 
weighted oscillation over the   cubes of side length at most $ \eta $, a fact that follows from the  the proof of the upper bound for the commutators in \cite{HLW} 
(or any of the other proofs for the upper bound).   For $ K_1$, the only contribution from $ b$ is the oscillation over cubes of side length at least $ 1/\eta $, and for $ K_2$, only oscillations for cubes which are either large, or at least a distance $ 1/ \eta $ from the origin.  In each case, the norm is small.  
We now choose $\eta=\eta(\epsilon)$ small enough, then we have $\lVert [b,T_j] \rVert _{L ^{p} (\sigma ) \to L ^{p} (\omega )} \lesssim \epsilon $ for $ j=0,1,2$.

It remains for us to argue that  $  [b,T_3]   $ is a compact operator.  
This is not quite trivial, due to the commutator structure, and the delicate nature of the weighted $ BMO$ space.  
For cubes $ P, Q$ of the same side length, let  $ K _{P,Q} (x,y)$ be a smooth kernel supported on $ P \times Q$, 
and with $ \lvert  K _{P,Q} (x,y)\rvert \lesssim \lvert  P\rvert ^{-2} $.  It follows from elementary facts about $ BMO _{ \nu }$ 
that the commutator 
\begin{equation*}
C _{P,Q} f (x) =  b (x) \int K_{P,Q} (x,y) f (y)\; dy -  \int K_{P,Q} (x,y) b (y)f (y)\; dy 
\end{equation*}
is bounded from $ L ^{p} (\sigma ) \to L ^{p} (\lambda )$, with norm that depends only on the relative positions of $ P$ and $ Q$.  
And, clearly, $ C _{P,Q} $ has compact range.  
The operator $ T_3$ can be well approximated by a finite sum $ \sum_{j} C _{P_j,Q_j} $.  Hence, $ [b,T_3] $ is compact. 
The proof of Theorem~\ref{t:Compact suff} is complete.
\end{proof}

\begin{proof}[Proof of Theorem~\ref{t:Compact nece}] 
Now assume that $b\in { BMO}_\nu(\mathbb R^d)$ such that $[b,T]$ is compact from $L^p(\sigma)$ to $L^p(\omega)$. 
But, for the sake of contradiction, further assume that   $b \not\in { VMO}_\nu(\mathbb R^d)$.  

The main idea
of getting contradiction is as follows:  on a Hilbert space $\mathcal H$, with canonical basis $e_j$, $j\in \mathbb N$, 
an operator $T $ with $T e_j = v$, with non-zero $v\in \mathcal H$, is necessarily unbounded.  We will see that, for example when $p=2$, a compact commutator from $L^2(\sigma)\to L^2(\omega)$
with symbol $b\in BMO_\nu(\mathbb R^d) \setminus VMO_\nu(\mathbb R^d)$ satisfies a variant of this condition.

Suppose that $b\not\in {\rm VMO}_\nu(\mathbb R^d)$, then at least one of the three conditions in Definition \ref{vmo} does not hold.  
The argument is similar in all three cases, and we just present the case that the first condition in Definition~\ref{vmo} does not hold.  
Then there exist $\delta_0>0$ and a sequence of balls $\{B_j\}_{j=1}^\infty=\{B_j(x_j,r_j)\}_{j=1}^\infty \subset \mathbb R^d$ such that $r_j\to 0$ as $j\to\infty$ and that
\begin{equation}\label{(i) not hold}
{1\over \nu(B_j)}\int_{B_j} |b(x)-b_{B_j}|dx\geq\delta_0.
\end{equation}
Without loss of generality, we can further assume that 
\begin{align}\label{ratio1}
4 r_{j_{i+1}}\leq  r_{j_{i}}.
\end{align}

According to the non-degenerate condition  \eqref{e-assump cz ker low bdd weak}: 
there exist  constants $3\le A_1\le A_2$ and a ball $\widetilde B_j:=B(y_j, r_j)$
such that $A_1 r_j\le |x_j- y_j|\le A_2 r_j$, 
and  for all $(x,y)\in ( B_j\times \widetilde{B}_j)$, $K(x, y)$ does not change sign and
\begin{equation}\label{e-assump cz ker low bdd}
|K(x, y)|\gs \frac1{r_j^d}.
\end{equation}

Let $m_{b}(\widetilde B_j)$ be a median value of $b$ on the ball $\widetilde B_j $. Namely,  
$m_{b}(\widetilde B_j)$ is a real number so that the two sets below have measure at least $ \frac{1}2 \lvert  \tilde B_j\rvert $.  
\begin{align*}
\nonumber F_{j,1}\subset\{ y\in \tilde B_j: b(y)\leq m_b(\tilde B_j) \},\qquad F_{j,2}\subset\{ y\in \tilde B_j: b(y)\geq m_b(\tilde B_j) \}.
\end{align*}
Next we define
$E_{j,1}=\{ x\in B: b(x)\geq m_b(\tilde B_j) \},\qquad E_{j,2}=\{ x\in B: b(x)< m_b(\tilde B_j) \}.$\ 
Then $B_j=E_{j,1}\cup E_{j,2}$ and $E_{j,1}\cap E_{j,2}=\emptyset$. 
And it is clear that 
\begin{equation}\begin{split}\label{bx-by0-1 1}
b(x)-b(y) &\geq 0, \quad (x,y)\in E_{j,1}\times F_{j,1},\qquad b(x)-b(y) &< 0, \quad (x,y)\in E_{j,2}\times F_{j,2}.
\end{split}\end{equation}
And for $(x,y)$ in $(E_{j,1}\times F_{j,1} )\cup (E_{j,2}\times F_{j,2})$, we have 
\begin{align}\label{bx-by-1 1}
|b(x)-b(y)| 
&=|b(x)-m_b(\tilde B_j)| +|m_b(\tilde B_j) -b(y)| \geq |b(x)-m_b(\tilde B_j)|. 
\end{align}

We now consider 
$$\widetilde F_{j,1}:= F_{j,1}\backslash \bigcup_{\ell=j+1}^\infty \tilde B_\ell\quad{\rm and}\quad \widetilde F_{j,2}:= F_{j,2}\backslash \bigcup_{\ell=j+1}^\infty \tilde B_\ell,\quad {\rm for}\ j=1,2,\ldots.$$
Then, based on the decay condition of the measures of  $\{B_j\}$ as in \eqref{ratio1} we obtain that
for each $j$,
\begin{align}\label{Fj1}
 |\widetilde F_{j,1}| &\geq |F_{j,1}|- \Big| \bigcup_{\ell=j+1}^\infty \tilde B_\ell\Big| \geq
{1\over 2} |\tilde B_j|-\sum_{\ell=j+1}^\infty \big|  \tilde B_\ell\big|\geq {1\over 2} |\tilde B_j|- {1\over 3}|\tilde B_j|= {1\over 6} |\tilde B_j|.
\end{align}
Similar estimate holds for $\widetilde F_{j,2}$.

Now for each $j$, we have that
\begin{align*}
&{1\over \nu(B_j)} \int_{B_j} |b(x)-b_{B_j}|dx\\
&\leq{2\over \nu(B_j)}\int_{B_{j}}\big|b(x)-m_b(\tilde B_j)\big|dx\\
&= {2\over \nu(B_j)}\int_{E_{j,1}}\big|b(x)-m_b(\tilde B_j)\big|dx + {2\over \nu(B_j)}\int_{E_{j,2}}\big|b(x)-m_b(\tilde B_j)\big|dx.
\end{align*}
Thus, combining with \eqref{(i) not hold} and the above inequalities, we obtain that as least one of the following inequalities holds:
\begin{align*}
{2\over \nu(B_j)}\int_{E_{j,1}}\big|b(x)-m_b(\tilde B_j)\big|dx \geq {\delta_0\over2},\quad 
{2\over \nu(B_j)}\int_{E_{j,2}}\big|b(x)-m_b(\tilde B_j)\big|dx \geq {\delta_0\over2}.
\end{align*}

Without lost of generality, we now assume that the first one holds, i.e., 
\begin{align*}
{2\over \nu(B_j)}\int_{E_{j,1}}\big|b(x)-m_b(\tilde B_j)\big|dx \geq {\delta_0\over2}.
\end{align*}

Therefore, for each $j$, from \eqref{e-assump cz ker low bdd} and \eqref{Fj1}  we obtain that 
\begin{align*}
{\delta_0\over4}&\leq{1\over \nu(B_j)}\int_{E_{j,1}}\big|b(x)-m_b(\tilde B_j)\big|dx\\
&\lesssim 
{1\over \nu(B_j)}{{|\widetilde F_{j,1}|}\over |B_j|}\int_{E_{j,1}}\big|b(x)-m_b(\tilde B_j)\big|dx\\
&\lesssim
{1\over \nu(B_j)}\int_{E_{j,1}}\int_{\widetilde F_{j,1}} |K(x,y)|\big|b(x)-b(y)\big|dydx.
\end{align*}
Next, since for $x\in E_{j,1}$ and $y\in \widetilde F_{j,1}$, $K(x,y)$ does not change sign and  
$b(x)-b(y)$ does not change sign either, we obtain that
\begin{align*}
{\delta_0}
&\lesssim {1\over \nu(B_j)}\bigg|\int_{E_{j,1}}\int_{\widetilde F_{j,1}} K(x,y)\big(b(x)-b(y)\big)dy\bigg|dx\\
&\lesssim
{1\over \sigma(B_j)^{1\over p}   \lambda'(B_j)^{1\over p'}}  \int_{E_{j,1}}\left|[b, T](\chi_{\widetilde F_{j,1}})(x)\right|dx\\
&=
{1\over    \lambda'(B_j)^{1\over p'}}  \int_{E_{j,1}}\left|[b, T]\bigg({\chi_{\widetilde F_{j,1}} \over \sigma(B_j)^{1\over p}}\bigg)(x)\right|dx,
\end{align*}
where $\lambda'(x) = \lambda^{-{1\over p-1}}(x)$, and  in the last equality, we use $p'$ to denote the conjugate index of $p$.

Next, by using H\"older's
inequality we further have
\begin{align*}
\delta_0 &\lesssim {1\over    \lambda'(B_j)^{1\over p'}}  \int_{E_{j,1}}\left|[b, T](f_j)(x)\right|  \lambda^{1\over p}(x) \lambda^{-{1\over p}}(x) dx\\
&\lesssim
 {1\over \lambda'(B_j)^{1\over p'}} \lambda'(E_{j,1})^{1\over p'} \bigg( \int_{\mathbb R^d}\big|[b, T](f_j)(x)\big|^p \lambda(x) dx\bigg)^{1\over p}\\
&\lesssim
\bigg( \int_{\mathbb R^d}\big|[b, T](f_j)(x)\big|^p \lambda(x)dx \bigg)^{1\over p},
\end{align*}
where in the above inequalities 
we denote
$$ f_j := {\chi_{\widetilde F_{j,1}} \over \sigma(B_j)^{1\over p}}.$$
This is a sequence of disjointly supported functions, by \eqref{Fj1}, with $ \|f_j\|_{L^p(\sigma)} \simeq  1 $.

Return to the assumption of compactness, and let $ \phi $ be in the closure of $  \{[b, T](f_j)\}_j$.  
We have $ \lVert \phi \rVert _{L ^{p} (\lambda )} \gtrsim 1$.   And, choose $ j_i$ so that 
$$ \|\phi- [b, T](f_{j_i})\|_{L^p(\lambda)} \leq  2^{-i}. $$

We then take non-negative numerical sequence $\{a_i\}$ with 
$$ \|\{a_i\}\|_{\ell^{p'}}<\infty \quad {\rm but}\quad   \|\{a_i\}\|_{\ell^1}=\infty.  $$  
Then, $ \psi =  \sum_{i} a_i f _{j_i} \in L ^{p} (\sigma ) $, and 
\begin{align*}
\Bigl\lVert  \sum_i {a_i} \phi   -  [ b, T ]{\psi } \Bigr\rVert_{L^p (\sigma )}  
&\leq \bigg\| \sum_{i=1}^\infty a_i   \bigl ( \phi - [b, T](f_{j_i}\bigr) \bigg\|_{L^p(\lambda)}
\\ & \leq \lVert  a_i\rVert _{\ell ^{p'}}  
\Bigl [   \sum_{i} \|\phi- [b, T](f_{j_i}) \| _{L^p(\lambda)} ^{p}  \Bigr] ^{1/p}  \lesssim 1. 
\end{align*}
So $ \sum_i {a_i} \phi  \in L^p (\sigma )$.  
But $ \sum_i{a_i}\phi $ is infinite on a set of positive measure. This is a contradiction that completes the proof.  
\end{proof}


\section{Properties for $VMO _{\nu }(\mathbb R^d )$: proof of Theorem~\ref{t:VMO}}  \label{s:VMO}

In this section we prove Theorem \ref{t:VMO}. We split it into two subsections. In the first subsection
we show that smooth compactly supported functions are dense in $ VMO _{\nu }$ in dimension one.  
 In the second, we construct a counterexample: a nice function $f$, which is not even in $BMO_\nu(\mathbb R^d)$, $d\geq2$.

\subsection{$VMO _{\nu }(\mathbb R )$ is the closure of ${C ^{\infty } _{0}}(\mathbb R) $ under the $BMO _{\nu }(\mathbb R )$ norm}
We  provide the following characterization of $VMO _{\nu }(\mathbb R )$, which is parallel to the well-known result in the unweighted setting, however, it is new in this weighted setting.

\begin{theorem}\label{t:Smooth}  For $ \nu \in A_2 (\mathbb R )$,  we have 
$
\overline   {C ^{\infty } _{0}} ^{ BMO _{\nu }(\mathbb R )} = VMO _{\nu }(\mathbb R ) 
$. 
\end{theorem}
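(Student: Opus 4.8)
The plan is to prove the two inclusions separately, following the template of Uchiyama's argument in the unweighted case, but tracking carefully how the weight $\nu$ enters. For the easy inclusion $\overline{C^\infty_0}^{BMO_\nu(\mathbb R)} \subseteq VMO_\nu(\mathbb R)$, first I would check that every $g \in C^\infty_0(\mathbb R)$ satisfies the three limiting conditions (i)--(iii) of Definition~\ref{vmo}. For (i), the mean oscillation of a Lipschitz function over an interval of length $a$ is $O(a)$, while $\nu(B) \gtrsim |B|^{?}$ is controlled below using the $A_2$ (in fact $A_\infty$) property of $\nu$ — specifically one uses that $\nu(B) \ge c|B|$ on bounded regions, so the ratio tends to $0$ as $a \to 0$. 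For (ii) and (iii), one uses that $g$ is supported in a fixed ball and, for large $B$ or $B$ far from the support, either $b_B \to 0$ or the oscillation integral is essentially $\int |g|$, divided by $\nu(B) \to \infty$ (the reverse doubling / growth of $\nu(B)$ coming again from $A_2$). Then I would show $VMO_\nu$ is closed in the $BMO_\nu$ norm: if $b_k \to b$ in $BMO_\nu$ and each $b_k$ satisfies (i)--(iii), an $\varepsilon/3$ argument gives the same for $b$, since $\frac1{\nu(B)}\int_B |b-b_B| \le \frac1{\nu(B)}\int_B|b-b_k-(b-b_k)_B| + \frac1{\nu(B)}\int_B|b_k-(b_k)_B| \le \|b-b_k\|_{BMO_\nu} + (\text{small})$.

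For the substantive inclusion $VMO_\nu(\mathbb R) \subseteq \overline{C^\infty_0}^{BMO_\nu(\mathbb R)}$, given $b \in VMO_\nu(\mathbb R)$ and $\varepsilon > 0$, I would produce a $g \in C^\infty_0$ with $\|b - g\|_{BMO_\nu} < \varepsilon$ in three standard moves: truncation, cutoff, and mollification. Using conditions (ii) and (iii), choose $R$ large so that the $BMO_\nu$ oscillation of $b$ over all intervals of length $\ge R$ and over all intervals outside $B(0,R)$ is $\lesssim \varepsilon$; then replace $b$ by $b \cdot \chi_{B(0,2R)}$ suitably modified near the boundary (or subtract a constant and multiply by a smooth cutoff $\varphi$ equal to $1$ on $B(0,R)$), checking that the $BMO_\nu$ norm of the difference is small — this requires estimating oscillations of $(b-c)\varphi$ on intervals straddling $\partial B(0,2R)$, which is where the one-dimensionality will help (intervals are connected and ordered). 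Then mollify: for $b$ compactly supported with small oscillation at small scales (condition (i)), the mollification $b * \rho_t$ satisfies $\|b - b*\rho_t\|_{BMO_\nu} \to 0$ as $t \to 0$; the key pointwise bound is $|b*\rho_t(x) - b_B| \lesssim$ average oscillation of $b$ over $O(|B|+t)$-neighborhoods, and then one splits into $|B| \le t$ and $|B| > t$ and uses (i).

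The main obstacle, and the reason $d=1$ is special, is the cutoff step: to approximate a compactly-supported-modulo-constants function by one that is genuinely compactly supported (and then smooth), one needs that multiplying by a smooth bump does not inflate the weighted oscillation. In dimension one, an interval $B$ either lies in $\{\varphi = 1\}$, lies in $\{\varphi = 0\}$, or meets the transition annulus $\{0 < \varphi < 1\}$ which is a bounded set of bounded measure; on that bounded set $\nu(B) \asymp |B|$ and $b$ restricted there is a fixed $BMO$ (hence $BMO_\nu$) function, so the oscillation of $(b-c)\varphi$ is controlled. In higher dimensions the transition region is an annulus of large measure and the interaction between the geometry of balls and the weight is genuinely worse — indeed §\ref{s:VMO} shows the statement fails for $d \ge 2$ — so I would be careful to invoke connectedness/ordering of intervals and the boundedness of the support only in the $d=1$ argument, and to state explicitly where each of (i), (ii), (iii) is used. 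I expect the weighted estimates themselves (comparing $\nu(B)$ across scales via $A_2$, and bounding mollification errors) to be routine once the correct decomposition of the collection of balls $B$ is in place.
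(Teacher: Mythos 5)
Your overall architecture (both inclusions, closedness of $VMO_\nu$ under the $BMO_\nu$ norm, truncation--cutoff--mollification for the hard direction) matches the paper's, and on the hard inclusion you are actually more detailed than the paper, which essentially just mollifies. But there is a genuine error in the ``easy'' inclusion, and it sits exactly at the point where the hypotheses $\nu\in A_2$ and $d=1$ are used. You claim that $\nu(B)\ge c\lvert B\rvert$ on bounded regions for an $A_2$ (``in fact $A_\infty$'') weight. This is false: $\nu(x)=\lvert x\rvert^{\alpha}$ with $0<\alpha<1$ is in $A_2(\mathbb R)$ and $\nu(B(0,r))\approx r^{1+\alpha}\ll r$ for small $r$. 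The correct substitute, which the paper isolates as Lemma~\ref{l:A2}(2), is that $\nu(Q)\gtrsim \ell(Q)^{d+d'}$ for some $0<d'<d$ depending on $[\nu]_{A_2}$ (proved via a reverse H\"older inequality for $\nu^{-1}$, so genuinely an $A_2$ fact, not an $A_\infty$ fact). Since $\int_B\lvert b-b_B\rvert\lesssim \ell(B)\,\lvert B\rvert$ for $b$ Lipschitz, the quantity in condition (i) is $O(\ell(B)^{1-d'})$, which tends to $0$ precisely because $d'<1$ when $d=1$. For a general $A_\infty$ weight no such $d'$ exists (take $\nu=\lvert x\rvert^{10}$ on $\mathbb R$), so your parenthetical weakening to $A_\infty$ also fails.

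Relatedly, you have misplaced the reason the theorem is one-dimensional. You locate it in the cutoff step (the geometry of the transition annulus), but the paper's counterexample in \S\ref{s:VMO} shows the failure already occurs in the inclusion you call easy: for $d\ge 2$ the exponent $d'$ above can exceed $1$, and a $C^\infty_0$ function with a simple zero at a point where the weight vanishes to order $2-\epsilon$ is not even in $BMO_\nu(\mathbb R^2)$. So $C^\infty_0\not\subset VMO_\nu$ in higher dimensions, and no amount of care with cutoffs can rescue the statement there. Once you replace the false lower bound with Lemma~\ref{l:A2}(2) and observe that $d'<1$ is what makes $d=1$ work, the remainder of your outline goes through.
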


This elementary Lemma is needed.  
\begin{lemma}\label{l:A2} Let $ \nu \in A_2(\mathbb R^d)$.  Then, 
\begin{itemize}
\item[(1)] we have $ \nu (\mathbb R ^{d}) = \infty $;
\item[(2)] there is a $ 0< d' = d' _{[\nu ] _{A_2}} <d$ so that  for any $ T>1$, we have 
\begin{equation}\label{e:inf}
\inf _{\substack{Q \;:\; Q\subset [-T,T] ^{d}\\  \ell (Q)  \leq 1}}  \frac{\nu (Q)} { \ell (Q) ^{d + d'}} >0. 
 \end{equation}
 \end{itemize}

\end{lemma}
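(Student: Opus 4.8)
\textbf{Proof proposal for Lemma~\ref{l:A2}.}

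The plan is to derive both statements from the reverse doubling / reverse Hölder properties that any $A_2$ weight enjoys, together with the $A_\infty$ comparison between $\nu$-measure and Lebesgue measure on nested cubes. For part (1), I would argue by contradiction: if $\nu(\mathbb R^d) < \infty$, then since $\nu \in A_2$ we also have $\nu^{-1} \in A_2$, so $\sigma := \nu^{-1}$ is locally integrable; now pair the $A_2$ condition on a large cube $Q_R = [-R,R]^d$ against the fact that $\nu(Q_R) \le \nu(\mathbb R^d) =: M < \infty$ for all $R$. The $A_2$ bound reads $\bigl(\intav_{Q_R}\nu\bigr)\bigl(\intav_{Q_R}\sigma\bigr) \le [\nu]_{A_2}$, so $\intav_{Q_R}\sigma \le [\nu]_{A_2}\,|Q_R|/M \to \infty$ is permitted, which is not yet a contradiction; instead I would use that $\intav_{Q_R}\nu = \nu(Q_R)/|Q_R| \le M/|Q_R| \to 0$, forcing $\intav_{Q_R}\sigma \to \infty$ at a controlled rate, i.e.\ $\sigma(Q_R) \gtrsim |Q_R|^2/M$. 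But $\sigma$ is in $A_2$ as well, hence doubling, and a doubling weight cannot grow faster than polynomially of a fixed degree; combined with $\sigma$ being locally integrable and the analogous lower bound one gets by symmetry, one concludes both $\nu$ and $\sigma = 1/\nu$ are finite on $\mathbb R^d$, which contradicts $\int_{\mathbb R^d}\nu \cdot \int_{\mathbb R^d}(1/\nu) = \infty$ forced by Cauchy--Schwarz (the product of the two integrals over any common region is at least $|$region$|^2$, letting the region exhaust $\mathbb R^d$). A cleaner route, which I would actually write, is: the reverse doubling property of $A_\infty$ weights gives $\nu(2Q) \ge (1+c)\,\nu(Q)$ for a fixed $c = c([\nu]_{A_2})>0$ and every cube $Q$, so iterating from a fixed unit cube $Q_0$ yields $\nu(2^k Q_0) \ge (1+c)^k \nu(Q_0) \to \infty$, hence $\nu(\mathbb R^d) = \infty$.

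For part (2), the key tool is the $A_\infty$ property in the quantitative form: there exist $\epsilon = \epsilon([\nu]_{A_2}) > 0$ and $C = C([\nu]_{A_2})$ such that for all cubes $Q' \subset Q$,
\begin{equation*}
\frac{\nu(Q')}{\nu(Q)} \ge \frac{1}{C}\left(\frac{|Q'|}{|Q|}\right)^{1/\epsilon},
\end{equation*}
which is exactly the reverse form of the $A_\infty$ (reverse-Hölder) estimate and holds because $\nu^{-1/(p-1)} = \nu^{-1} \in A_2$ as well. Fix $T > 1$ and let $Q \subset [-T,T]^d$ with $\ell(Q) \le 1$. Choose a cube $Q_T \supset [-T,T]^d$ with $\ell(Q_T) \simeq T$; then $Q \subset Q_T$ and the displayed inequality gives $\nu(Q) \ge C^{-1}\nu(Q_T)\,(\ell(Q)/\ell(Q_T))^{d/\epsilon}$. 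Writing $d' := d/\epsilon - d$, note $d' > 0$; a priori $d/\epsilon$ could exceed $2d$, but since $\nu \in A_2$ (not merely $A_\infty$) one gets the sharper exponent $1/\epsilon < $ (something controlled) ensuring $d' < d$ — this is where the $A_2$ hypothesis (rather than just $A_\infty$) is used, via the known relation between the $A_2$ constant and the reverse-Hölder exponent of $\nu$, giving $d' = d'_{[\nu]_{A_2}} \in (0,d)$. Then
\begin{equation*}
\frac{\nu(Q)}{\ell(Q)^{d+d'}} \ge \frac{\nu(Q_T)}{C\,\ell(Q_T)^{d+d'}} > 0,
\end{equation*}
and the right-hand side is a positive constant independent of $Q$ (depending on $T$ and $[\nu]_{A_2}$), so the infimum in \eqref{e:inf} is strictly positive.

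The main obstacle I anticipate is pinning down the exponent $d'$ so that it is genuinely less than $d$: the bare $A_\infty$ reverse-Hölder inequality only yields \emph{some} positive power, and to force $d' < d$ I need the quantitative self-improvement specific to $A_2$ weights — concretely, that $\nu \in A_2$ implies $\nu$ satisfies a reverse Hölder inequality $RH_s$ with $s > 1$ depending on $[\nu]_{A_2}$, and dually $\nu^{-1} \in RH_{s'}$, which together sharpen the lower bound on $\nu(Q')/\nu(Q)$ to have exponent strictly below $2$ and hence $d' = d(1/\epsilon - 1) < d$ after optimizing. I would invoke the standard quantitative $A_2 \Rightarrow RH$ estimates (e.g.\ from the Coifman--Fefferman / Hrušček-type bounds, or the sharp versions in the literature) rather than reprove them, and simply record the resulting value of $d'_{[\nu]_{A_2}}$. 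The remaining steps — reverse doubling for part (1), and the chain of inequalities for part (2) — are routine once this exponent bookkeeping is in place.
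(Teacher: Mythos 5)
Your proposal is correct in substance, but it reaches both conclusions by a different route than the paper, so a comparison is in order. For part (1), the paper argues by contradiction: if $\nu(\mathbb R^d)<\infty$, the $A_2$ product forces $\nu^{-1}(Q_{2^k})\gtrsim 2^{2kd}$, and then the boundedness of the Hardy--Littlewood maximal function on $L^2(\nu^{-1})$ applied to $\nu\mathbf 1_{Q_1}$ blows up. Your reverse-doubling argument ($\nu(2Q)\ge(1+c)\nu(Q)$ for $A_p$ weights, iterated from a fixed cube) is more elementary and avoids Muckenhoupt's theorem entirely; it is a perfectly valid substitute, and I would discard the first, muddled paragraph of your part (1) in favor of it, as you yourself suggest. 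For part (2), the paper also argues by contradiction: it takes $p>1$ with $\nu^{-1}\in RH_p$, sets $d'=d/p$, and from a vanishing infimum extracts disjoint sets $E_j$ on which $\nu\le \ell(Q_j)^{d'}$, whence $\nu^{-p}(Q_T)=\infty$, contradicting local integrability of $\nu^{-p}$. Your direct argument runs the same engine forward: $\nu^{-1}\in RH_t$ is equivalent to the self-improvement $\nu\in A_q$ with $q=1+1/t<2$, and the standard $A_q$ lower bound $\nu(Q)/\nu(Q_T)\gtrsim(|Q|/|Q_T|)^{q}$ then gives \eqref{e:inf} with $d'=(q-1)d=d/t$ --- exactly the paper's exponent. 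You correctly identify the one genuine pressure point, namely that the bare $A_2$ comparison only yields exponent $2$ (i.e.\ $d'=d$, which is not strict enough) and that the strict improvement must come from reverse H\"older for $\nu^{-1}$; you defer the quantitative bookkeeping to standard references, which is acceptable, though in a final write-up you should state explicitly that $d'=d/t$ where $t>1$ is the reverse H\"older exponent of $\nu^{-1}$, so that $0<d'<d$ is manifest. The trade-off: the paper's contradiction/covering argument is self-contained given $RH_p$, while your direct version is shorter and makes the dependence of $d'$ on $[\nu]_{A_2}$ more transparent.
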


Standard examples show that the second result is optimal. 
Let $ Q_T = [-T,T] ^{d}$. We will systematically suppress the dependence of various constants on the $ A_2$ constants of the weights.

\begin{proof}
For both, we argue by contradiction.  

 Assume $ \nu (\mathbb R ^{d}) < \infty $.  
The $ A_2$ product is always at least one. 
So, for all $ k \in \mathbb N $ we have 
$   \nu ^{-1} ({Q _{2 ^{k}}}) \gtrsim 2 ^{2kd}$.  And, so by equidistribution of $ \nu ^{-1} $,  
\begin{equation*}
\nu ^{-1} (Q _{2 ^{(k+1)}} \setminus Q _{2 ^{k}}) \gtrsim 2 ^{2kd}.  
\end{equation*}
Now, $ v ^{-1}$ is an $ A_2$ weight, so by Muckenhoupt's theorem, the maximal function $ M (f) $ is bounded from $ L ^2 (\nu ^{-1}  )$ to $ L ^2 (\nu ^{-1} )$, where $M$ is the standard Hardy--Littlewood maximal function on $\mathbb R^d$.  Apply it to $ \mathbf 1_{Q_1}$, so see that 
\begin{equation*}
\lVert M( \nu  \mathbf 1_{Q_1}) \rVert_{L^2 (\nu ^{-1} )} ^2 \gtrsim  \sum_{k \in \mathbb N }  2 ^{-2kd} \nu ^{-1} (Q _{2 ^{k+1}} \setminus Q _{2 ^{k}})  = \infty . 
\end{equation*}
This is a contradiction.  So $  \nu (\mathbb R ^{d}) = \infty $.  

\smallskip 

Given $ \nu \in A_2$, we have $ \nu ^{-1} \in A_2$, so there is a $ p >1$ so that $ \nu ^{-1}$ is in the Reverse Holder class $ RH _{p}$. 
Choose $ d' $ so that $ d' p = d$.  
Again, argue by contraction.  Fix $ T$ so that the infimum in \eqref{e:inf} is zero.
Then, we can find a sequence of cubes $ Q_j \subset Q_T$ so that 
each $ Q_j$ contains a set $ E_j$ with $ \{E_j\}$ being pairwise disjoint, and $2 \lvert  E_j\rvert > \lvert  Q_j\rvert  $.  
Finally, by equidistribution of $ A_2$ weights, $  \nu (x) \leq \ell( Q_j) ^{d'}$ for $ x\in E_j$.  Then, it follows that 
\begin{equation*}
\nu ^{-p} (Q_T) \geq \sum_{j}  \int _{E_j}\nu ^{-p}(x) \;dx  \gtrsim \sum_{j} 1 = \infty . 
\end{equation*}
But, $ \nu ^{-p} $ must be locally in $ L ^{1}$, so we have a contradiction, which yields (2) holds.
\end{proof}

\begin{lemma}\label{l:C2} For $ \nu \in A_2 (\mathbb R )$   we have  $  C ^2 _0(\mathbb R ) \subset  VMO _{\nu }(\mathbb R )$. 

\end{lemma}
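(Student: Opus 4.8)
The plan is to show that any $f\in C^2_0(\mathbb R)$ satisfies all three vanishing conditions in Definition~\ref{vmo} with weight $\nu$. Fix $f\in C^2_0(\mathbb R)$ with $\mathrm{supp}\,f\subset[-T,T]$ for some $T>1$, and let $L=\|f'\|_\infty<\infty$. For any interval $Q$ we have the elementary Lipschitz bound
\begin{equation*}
\frac{1}{|Q|}\int_Q |f(x)-f_Q|\,dx \le L\,\ell(Q),
\end{equation*}
and moreover $f_Q=0$ whenever $Q\cap[-T,T]=\emptyset$. So the task reduces to controlling $\ell(Q)$ against $\nu(Q)$, and this is exactly what Lemma~\ref{l:A2}(1) and (2) provide.

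First I would do condition (i): for $Q$ with $\ell(Q)=a\le 1$. If $Q\subset[-2T,2T]$, then by Lemma~\ref{l:A2}(2) we have $\nu(Q)\gtrsim \ell(Q)^{1+d'}=a^{1+d'}$ with $d'=d'_{[\nu]_{A_2}}>0$ (here $d=1$), hence
\begin{equation*}
\frac{1}{\nu(Q)}\int_Q|f(x)-f_Q|\,dx \le \frac{L\,a|Q|}{\nu(Q)} = \frac{L\,a^2}{\nu(Q)}\lesssim \frac{a^2}{a^{1+d'}}=a^{1-d'}.
\end{equation*}
Wait — I need $1-d'>0$; since $d'<d=1$ this holds, so this term tends to $0$ as $a\to0$. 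If instead $Q\cap[-2T,2T]=\emptyset$ then $Q$ is disjoint from $\mathrm{supp}\,f$ and the integral is $0$. (One must also handle $Q$ straddling the boundary of $[-2T,2T]$, but such $Q$ with $\ell(Q)\le 1$ still lies in $[-2T-1,2T+1]$, so the same lower bound on $\nu(Q)$ applies.) This gives (i).

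Next, conditions (ii) and (iii) concern large balls or far-away balls. For these I would use that $f$ is compactly supported, so $\int_Q|f-f_Q|\,dx \le 2\int_Q|f| \le 2\|f\|_1$ is a fixed finite constant, while $\nu(Q)\to\infty$. Concretely: by Lemma~\ref{l:A2}(1), $\nu(\mathbb R)=\infty$, so $\nu(B(x_0,a))\to\infty$ as $a\to\infty$ for any fixed center, giving (ii) after noting $\nu(Q)\ge \nu(Q\cap[-T,T])$ combined with a doubling/comparison argument — actually cleaner: for $Q$ with $\ell(Q)=a$ large, if $Q$ meets $[-T,T]$ then $Q\supset$ some fixed interval only if centered near the support; in general use that $Q$ of length $a$ which meets the support contains an interval of length $a/3$ lying in $[-T, T+a]$... the simplest route is: the oscillation integral is bounded by $2\|f\|_1$, and $\nu(Q)\ge c\,\nu(Q_0)$ for a fixed small interval $Q_0\subset[-T,T]$ when $Q$ is large and meets $[-T,T]$, by the $A_2$ (hence doubling, reverse-doubling) property applied along a chain — but honestly the cleanest is to observe $\nu(Q)\to\infty$ uniformly over $\ell(Q)=a$, $Q\cap[-T,T]\ne\emptyset$, since such $Q$ all contain a translate... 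Let me just say: one uses $A_\infty$ self-improvement / reverse doubling to conclude $\inf\{\nu(Q): \ell(Q)=a,\ Q\cap[-T,T]\ne\emptyset\}\to\infty$. For (iii), balls $Q\subset\mathbb R\setminus B(x_0,a)$ with $a\to\infty$ are eventually disjoint from $[-T,T]$, so the integral vanishes identically for $a>|x_0|+T$ — the easiest of the three.

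The main obstacle is condition (i): the competing powers $a^2$ (from the product of the Lipschitz gain $a$ and $|Q|=a$) versus the lower bound $\nu(Q)\gtrsim a^{1+d'}$. This works precisely because in dimension $d=1$ we have $a^2/a^{1+d'}=a^{1-d'}\to 0$ since $d'<1$ — and this is exactly where the dimension restriction enters, matching the theorem's claim that the result fails for $d\ge 2$ (there the analogous exponent is $a^{d+1}/a^{d+d'}=a^{1-d'}$, which still works, so in fact the failure in higher dimensions must come from a genuinely worse example, i.e.\ from $BMO_\nu$ itself, consistent with \S\ref{s:VMO}'s counterexample construction rather than from this Lemma). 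For (ii) the only care needed is to make the divergence $\nu(Q)\to\infty$ uniform over the relevant family of balls, which follows from reverse-doubling of $A_2$ weights together with Lemma~\ref{l:A2}(1).
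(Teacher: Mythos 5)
Your proposal is correct and follows essentially the same route as the paper: for small intervals you combine the Lipschitz bound $\int_Q|f-f_Q|\,dx\lesssim \ell(Q)^2$ with the lower bound $\nu(Q)\gtrsim \ell(Q)^{1+d'}$ from Lemma~\ref{l:A2}(2) to get decay $\ell(Q)^{1-d'}$, and for large or distant intervals you use compact support together with $\nu(\mathbb R)=\infty$ from Lemma~\ref{l:A2}(1). The uniformity issue you raise in condition (ii) is real but is settled exactly as you suggest (any interval of radius $a\ge T$ meeting the support contains a subinterval of length $a$ inside $B(0,3a)$, so doubling plus $\nu(\mathbb R)=\infty$ gives the uniform divergence); the paper's own proof elides this point entirely.
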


\begin{proof}
From Definition \ref{vmo}, for every $b\in C ^2 _0(\mathbb R )$, it suffices to check the three conditions. The first  of these  conditions
is that the contribution from oscillations on large scales tends to zero. This follows from compactness and  $ \nu (\mathbb R ) = \infty $.  

The second concerns  medium scales. Oscillations should be bounded, but that follows from $ b$ being bounded. 
Third, oscillations at small scales should vanish. 
For $ b\in C ^2 _0(\mathbb R )$, we can assume that $ \lVert  D b\rVert _{\infty } \leq 1$. We have 
\begin{equation*}
b (x) - b (y) =  D b (\xi_y) \cdot  (x-y ) + O ( \lvert  x- y \rvert ^2  ). 
\end{equation*}
That is, in the direction of $ \nabla b (y)$, the difference grows like $ |x-y|$, but is otherwise of small order.  
Then, it follows that for a  interval $ I$ with side length at most one, 
\begin{equation*}
\frac{1} {\lvert  I\rvert }\int _{I}\int _{I} \lvert  b (x) - b (y) \rvert \;dy\, dx \lesssim   \lvert  I\rvert  ^{2}. 
\end{equation*}
This integral has to be divided by $ \nu (I) \gtrsim \ell( I) ^{1+ d'}$, where $ 0< d'<1$.  
But then from \eqref{e:inf}, the conclusion follows.  
\end{proof}

\begin{lemma}\label{l:2Smooth} For $ v\in A_2 (\mathbb R )$ we have  
$
\overline {C ^{2 }_0} ^{BMO _{\nu }(\mathbb R )} = \overline {C ^{ \infty  }_0} ^{BMO _{\nu }(\mathbb R )}  
$. 

\end{lemma}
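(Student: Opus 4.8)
The inclusion $\overline{C^\infty_0}^{BMO_\nu(\mathbb R)}\subseteq\overline{C^2_0}^{BMO_\nu(\mathbb R)}$ is immediate from $C^\infty_0(\mathbb R)\subset C^2_0(\mathbb R)$, so the content is the reverse inclusion, and for that it suffices to show that an arbitrary $b\in C^2_0(\mathbb R)$ can be approximated in the $BMO_\nu(\mathbb R)$ norm by functions in $C^\infty_0(\mathbb R)$. The plan is to mollify: fix $\phi\in C^\infty_0(\mathbb R)$ with $\int\phi=1$, set $\phi_\epsilon(x)=\epsilon^{-1}\phi(x/\epsilon)$ and $b_\epsilon=b*\phi_\epsilon$, which lies in $C^\infty_0(\mathbb R)$. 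I would then prove $\lVert b-b_\epsilon\rVert_{BMO_\nu(\mathbb R)}\to0$ as $\epsilon\to0$.

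Write $g=g_\epsilon:=b-b_\epsilon$. Since $b\in C^2_0$, $g$ is $C^2$, supported in a fixed compact interval $[-R,R]$ once $\epsilon<1$, and satisfies $\lVert g\rVert_\infty\to0$ together with $\lVert g'\rVert_\infty=\lVert b'-b'*\phi_\epsilon\rVert_\infty\to0$ (uniform convergence of the approximate identity applied to the uniformly continuous, compactly supported $b'$). To bound the $BMO_\nu$ norm I would estimate the weighted oscillation $\tfrac1{\nu(I)}\int_I|g-g_I|$ over an arbitrary interval $I$: intervals $I$ disjoint from $[-R,R]$ contribute nothing, and the rest split into two regimes. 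For short intervals, $\ell(I)<1$: then $I\subset[-R-1,R+1]=:[-T,T]$, the Lipschitz bound gives $\int_I|g-g_I|\le\lVert g'\rVert_\infty\,\ell(I)^2$, while Lemma~\ref{l:A2}(2) gives $\nu(I)\gtrsim_T\ell(I)^{1+d'}$ with $0<d'<1$, so the oscillation is $\lesssim_b\lVert g'\rVert_\infty\,\ell(I)^{1-d'}\le\lVert g'\rVert_\infty$. For long intervals, $\ell(I)\ge1$ meeting $[-R,R]$: such an $I$ contains a subinterval $J\subset[-R-1,R+1]$ of length $\tfrac12$, so $\nu(I)\ge\nu(J)\ge c_b:=\min_{t\in[-R-1,R+1/2]}\nu([t,t+\tfrac12])>0$ (minimum over a compact parameter range of a continuous, strictly positive function of $t$), and the oscillation is $\le\tfrac{2\lVert g\rVert_1}{c_b}\le\tfrac{4R\lVert g\rVert_\infty}{c_b}$. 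Combining the two regimes yields $\lVert g\rVert_{BMO_\nu(\mathbb R)}\lesssim_b\lVert g\rVert_\infty+\lVert g'\rVert_\infty\to0$, with all implied constants independent of $\epsilon$ (they depend only on $b$, on $R$, and on $[\nu]_{A_2}$), which gives the claim.

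The main — essentially the only — obstacle is the short-scale estimate: the oscillation average must be divided by $\nu(I)$, which can be as small as $\ell(I)^{1+d'}$, so one genuinely needs the quadratic gain $\ell(I)^2$ coming from the $C^1$ regularity of $g$ to overcome Lemma~\ref{l:A2}(2)'s lower bound; this is precisely why one must pass through $C^2_0$ rather than, say, merely continuous compactly supported functions. The long-scale part and the support bookkeeping are routine once $\nu(\mathbb R)=\infty$ and local integrability of $\nu$ are in hand.
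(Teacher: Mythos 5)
Your proposal is correct and follows essentially the same route as the paper: mollify $b\in C^2_0$ by a smooth approximate identity and show the difference tends to $0$ in $BMO_\nu$, using the $\ell(I)^2$ oscillation gain against the lower bound $\nu(I)\gtrsim\ell(I)^{1+d'}$ from Lemma~\ref{l:A2}(2) at small scales. In fact you supply more detail than the paper, which simply asserts the convergence ``as follows from the proof of Lemma~\ref{l:C2}''; your split into short and long intervals is exactly the missing bookkeeping.
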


\begin{proof}
For the proof, we need only show that 
$ \overline {C ^{2 }_0} ^{BMO _{\nu }(\mathbb R )} \subset  \overline {C ^{ \infty  }_0}  ^{BMO _{\nu }(\mathbb R )} . $
Fix $ b \in C ^2 _0(\mathbb R )$, and a smooth non-negative  compactly supported kernel $ \psi $, with integral equal to one. 
Set $ \psi _{j} (x)  = 2 ^{j} \psi (x 2 ^{j})$, for  $ j\in \mathbb N $.   
Then, $ \psi _{j} \ast b$ converges to $ b$ in the $  BMO _{\nu }(\mathbb R )$ norm, as follows from the proof of Lemma~\ref{l:C2}.
It is also in $ C ^{ \infty  }_0(\mathbb R )$.  Hence, $ b\in  \overline {C ^{ \infty  }_0}  ^{BMO _{\nu }(\mathbb R )}$. 
\end{proof}


\begin{proof}[Proof of Theorem~\ref{t:Smooth}]  
By Lemma~\ref{l:2Smooth}, it suffices to verify that $ \overline {C ^{2 }_0} ^{BMO _{\nu }(\mathbb R)} = VMO _{\nu }(\mathbb R)$. 
But, the inclusion `$ \subset $' is the content of Lemma~\ref{l:C2}. So we should show that every function $ \beta\in VMO _{\nu }(\mathbb R)$ 
can be approximated by a function in $ C ^2 _0(\mathbb R)$.  

But using the notation $ \psi _j$ from the previous proof, we have $ \psi _{j} \ast b$ converges to $ b$ in $ BMO $ norm. 
And $ \psi \ast b \in \overline {C ^{2 }_0} ^{BMO _{\nu }(\mathbb R)} $ so the proof is complete. 
\end{proof}

\subsection{Nice functions are not necessarily  in $VMO _{\nu }(\mathbb R^d )$ when $d>1$}

We construct an example of a smooth function and weight $ \nu \in A_2$  such that it is not even in $BMO _{\nu }(\mathbb R^d )$ when $d>1$.

The point is that $ A_2$ weights can  vanish at a point, with the vanishing order allowed to be as large as $ d$ on $ \mathbb R ^{d}$. 
There is no such requirement for smooth functions, of course.  
Let $ d=2$. 
Take $w(x) := |x|^{2-\epsilon}$ for   $\epsilon\in(0,1)$. Then $w\in A_2$ and moreover, we get
\begin{align*}
{1\over |B_r|} \int_{B_r} w(x) dx &={1\over |B_r|} \int_{B_r} |x|^{2-\epsilon} dx \approx r^{2-\epsilon}.
\end{align*}
Take 
\begin{align}
g(x) :=\left\{
                \begin{array}{ll}
                   x_1 \cdot e^{ - {1\over 1- |x|^2}} ,\quad |x|<1;\\[7pt]
                  \quad \ \ 0,\qquad |x|\geq1.
                \end{array}
              \right.
\end{align}
Then it is easy to see that $g\in C_0^\infty(\mathbb R^2)$,  $ \int_{B_r} g(x)dx =0 $ for any ball $B_r$  centered at the origin, and ${1\over |B_r|} \int_{B_r} |g(x)|dx \approx r$ for $ 0< r<1$. 
(That is, $ g$ has a zero of order $ 1$ at the origin.)

The function $g(x)$ is not in $BMO _{w }(\mathbb R^2 )$, since 
\begin{align*}
{1\over w(B_r)} \int_{B_r} |g(x) -g_{B_r}| dx &={1\over w(B_r)} \int_{B_r} |g(x) | dx\approx {1\over r^{1-\epsilon}},
\end{align*}
and this goes to $\infty$ as $r\to 0^+$.


\section{Product Setting: weighted little vmo space and compactness}  \label{s:product}

We show that our main results and methods above can be applied to 
the product setting to characterize the two weight compactness for commutators with respect to little vmo spaces. Note that the characterization of the two weight boundedness of commutators with respect to little bmo spaces was obtained in \cite{HPW}.
For the sake of simplicity, we only consider the commutator of double Riesz transforms and a symbol $b$ in little bmo, that is
$[b, \mathcal R_j^{(1)}\mathcal R_k^{(2)}]$, where $\mathcal R_j^{(1)}$ is the $j$th Riesz transform on $\mathbb R^{n_1}$ and $\mathcal R_k^{(2)}$ is the $k$th Riesz transform on $\mathbb R^{n_2}$.

To begin with, we now recall the product $A_p(\mathbb R^{n_1}\times \mathbb R^{n_2})$ weights.
\begin{definition}
  \label{def:Ap product}
  Let $w(x_1,x_2)$ be a nonnegative locally integrable function
  on~$\mathbb R^{n_1}\times \mathbb R^{n_2}$. For $1 < p < \infty$, we
  say $w$ is a product $A_p$ \emph{weight}, written as $w\in
  A_p(\mathbb R^{n_1}\times \mathbb R^{n_2})$, if
  \[
    [w]_{A_p(\mathbb R^{n_1}\times \mathbb R^{n_2})}
    := \sup_R \left(\intav_R w\right)
    \left(\intav_R
      \left(\dfrac{1}{w}\right)^{1\over p-1}\right)^{p-1}
    < \infty.
  \]
  Here the supremum is taken over all rectangles~$R:= I_1\times I_2\subset \mathbb R^{n_1}\times \mathbb R^{n_2}$, where $I_i$ is a cube in $\mathbb R^{n_i}$ for $i=1,2$.
  The quantity $[w]_{A_p(\mathbb R^{n_1}\times \mathbb R^{n_2})}$ is called the \emph{$A_p$~constant
  of~$w$}.
\end{definition}

Next we recall the weighted little bmo and vmo spaces  on $\mathbb R^{n_1}\times \mathbb R^{n_2}$.
\begin{definition}
  \label{def:bmo}
 For $1 < p < \infty$ and $w\in A_p( \mathbb R^{n_1}\times \mathbb R^{n_2})$, the weighted little bmo space ${\rm bmo}_w(\mathbb R^{n_1}\times \mathbb R^{n_2})$ is the space of all locally integrable functions $b$ on $\mathbb R^{n_1}\times \mathbb R^{n_2}$ such that
 $$ \|b\|_{{\rm bmo}_w(\mathbb R^{n_1}\times \mathbb R^{n_2})}= \sup_R {1\over w(R)}\int_{R}|b(x_1,x_2) - b_R|dx_1dx_2<\infty, $$
where  the supremum is taken over all rectangles ~$R:= I_1\times I_2\subset \mathbb R^{n_1}\times \mathbb R^{n_2}$, where $I_i$ is a cube in $\mathbb R^{n_i}$ for $i=1,2$.
\end{definition}

\begin{definition}
  \label{def:vmo p}
 For $1 < p < \infty$ and $w\in A_p( \mathbb R^{n_1}\times \mathbb R^{n_2})$, the weighted little vmo space ${\rm bmo}_w(\mathbb R^{n_1}\times \mathbb R^{n_2})$ is the space of all  $b$ in ${\rm bmo}_w(\mathbb R^{n_1}\times \mathbb R^{n_2})$ such that
 \begin{align*}
&{\rm (i)}\ \lim_{a\to0} \sup_{R:\  {\rm diam }(R) =a} {1\over w(R)}\int_{R}|b(x_1,x_2) - b_R|dx_1dx_2=0,\\
&{\rm (ii)}\ \lim_{a\to\infty}\sup_{R:\  {\rm diam }(R) =a} {1\over w(R)}\int_{R}|b(x_1,x_2) - b_R|dx_1dx_2=0,\\
&{\rm (ii)}\ \lim_{a\to\infty}\sup_{R\subset \mathbb R^d\backslash B((x^{(1)}_0,x^{(2)}_0),a)}{1\over w(R)}\int_{R}|b(x_1,x_2) - b_R|dx_1dx_2 =0,
\end{align*}
where $(x^{(1)}_0,x^{(2)}_0)$ is any fixed point in $\mathbb R^{n_1}\times\mathbb R^{n_2}$.
where  the supremum is taken over all rectangles $R:= I_1\times I_2\subset \mathbb R^{n_1}\times \mathbb R^{n_2}$, where $I_i$ is a cube in $\mathbb R^{n_i}$ for $i=1,2$.
\end{definition}

\begin{theorem}\label{t:Main product}  Suppose $ 1< p < \infty $,
two weights $  \lambda  , \sigma  \in A_p( \mathbb R^{n_1}\times \mathbb R^{n_2})$ and $ \nu = (\sigma /\lambda  ) ^{1/p}$. Suppose that $ b\in bmo _{\nu } (\mathbb R^{n_1}\times \mathbb R^{n_2} )$, $\mathcal R_j^{(1)}$ is the $j$th Riesz transform on $\mathbb R^{n_1}$ and $\mathcal R_k^{(2)}$ is the $k$th Riesz transform on $\mathbb R^{n_2}$, $j=1,2,\ldots,n_1,$ $k=1,2,\ldots,n_2$. 
Then we obtain that $ b\in vmo _{\nu } (\mathbb R^{n_1}\times \mathbb R^{n_2} )$ if and only if the commutator
\ $[b, \mathcal R_j^{(1)}\mathcal R_k^{(2)}]:  L ^{p} (\sigma )  \to  L ^{p} (\lambda  ) $\  is compact.
\end{theorem}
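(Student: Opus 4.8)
The plan is to mirror the one-parameter argument of Theorem~\ref{t:Main} (i.e.\ Theorems~\ref{t:Compact suff} and~\ref{t:Compact nece}) in the product setting, using the bi-parameter Calder\'on--Zygmund structure of the double Riesz transform $\mathcal R_j^{(1)}\mathcal R_k^{(2)}$ and the two-weight boundedness of $[b,\mathcal R_j^{(1)}\mathcal R_k^{(2)}]$ on $L^p(\sigma)\to L^p(\lambda)$ for $b\in{\rm bmo}_\nu$, which is the content of \cite{HPW}. Since $[b, \mathcal R_j^{(1)}\mathcal R_k^{(2)}]$ factors (up to the standard commutator identity) through the two separate one-parameter commutators, I expect to reduce as much as possible to one-parameter statements applied in each variable separately, but the genuinely bi-parameter oscillation conditions (i)--(iii) in Definition~\ref{def:vmo p} force us to work with rectangles throughout.

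\textbf{Sufficiency ($b\in{\rm vmo}_\nu \Rightarrow [b,\mathcal R_j^{(1)}\mathcal R_k^{(2)}]$ compact).} I would repeat the kernel-splitting of the proof of Theorem~\ref{t:Compact suff}, now splitting the bi-parameter kernel $K(x_1,x_2;y_1,y_2)$ according to the two scales $|x_1-y_1|$ and $|x_2-y_2|$ and the two spatial localizations. Concretely, write $K=\sum K_{\vec t}$ where each piece is either supported where some $|x_i-y_i|<\eta$ (small scale in one of the parameters), or where some $|x_i-y_i|>1/\eta$ (large scale in one parameter), or far from the origin in one parameter, plus a fully localized and scale-truncated piece $K_{\rm good}$ supported on $|x_i|,|y_i|<2/\eta$ with both $\eta\le |x_i-y_i|\le 1/\eta$. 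For the truncated pieces, each contribution to the ${\rm bmo}_\nu$-norm comes only from oscillations over rectangles that are either small, large, or far in at least one parameter; by ${\rm vmo}_\nu$ (conditions (i)--(iii), which control rectangles of small diameter, large diameter, and far-away rectangles) each such contribution is $\lesssim\epsilon_\eta\to 0$, so those commutator pieces have small operator norm. The good piece $K_{\rm good}$ is then approximated by a finite sum of smooth kernels supported on products $R\times R'$ of rectangles of comparable size, for which the associated commutators are bounded $L^p(\sigma)\to L^p(\lambda)$ (an elementary fact about ${\rm bmo}_\nu$, exactly as in the one-parameter case) and have finite-dimensional-approximable, hence precompact, range; so $[b,T_{\rm good}]$ is compact. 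One subtlety: in the product setting ``oscillation over rectangles of bounded diameter'' is not the same as ``oscillation over rectangles with both sides bounded''—a rectangle can be long and thin—so the reduction of the $K_0$-type estimate to condition~(i) must be done carefully, possibly using the one-parameter boundedness separately in the long direction to absorb the thin-direction oscillation.

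\textbf{Necessity ($[b,\mathcal R_j^{(1)}\mathcal R_k^{(2)}]$ compact, $b\in{\rm bmo}_\nu \Rightarrow b\in{\rm vmo}_\nu$).} I would argue by contradiction exactly as in Theorem~\ref{t:Compact nece}: if one of (i)--(iii) fails, pick $\delta_0>0$ and rectangles $R_j=I_1^{(j)}\times I_2^{(j)}$ with (say) $\operatorname{diam}(R_j)\to 0$ and $\frac1{\nu(R_j)}\int_{R_j}|b-b_{R_j}|\gtrsim\delta_0$, thinned so their measures decay geometrically. Using the non-degeneracy of the Riesz kernels in each parameter (the lower bound $|K^{(i)}(x_i,y_i)|\gtrsim |I_i^{(j)}|^{-1}$ holds on $I_i^{(j)}\times\widetilde I_i^{(j)}$ for a translate $\widetilde I_i^{(j)}$, with the kernel not changing sign—this is where the product structure of the double Riesz transform is essential, since each one-parameter factor individually satisfies the non-degenerate condition), build median-based decompositions $E_j$, $F_j$ in each variable so that on $E_j\times F_j$ the integrand $K(b(x)-b(y))$ has a fixed sign and reproduces the oscillation. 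Then the normalized disjointly supported test functions $f_j=\chi_{\widetilde F_j}/\sigma(R_j)^{1/p}$ satisfy $\|[b,\mathcal R_j^{(1)}\mathcal R_k^{(2)}]f_j\|_{L^p(\lambda)}\gtrsim\delta_0$ and $\|f_j\|_{L^p(\sigma)}\simeq 1$; as in the one-parameter proof, compactness forces $[b,T]f_j$ to have a subsequential limit $\phi\neq 0$, and summing $\psi=\sum a_i f_{j_i}$ with $\{a_i\}\in\ell^{p'}\setminus\ell^1$ yields $\sum a_i\phi\in L^p(\lambda)$, which is impossible since $\sum a_i\phi$ is infinite on a set of positive measure. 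The bi-parameter median construction is more involved because one must split each of the two variables independently and track four sign-combinations, but the geometric-decay thinning of the rectangles still guarantees $|\widetilde F_j|\gtrsim|\widetilde R_j|$ in the appropriate product sense.

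\textbf{Main obstacle.} I expect the hardest point to be the sufficiency direction, specifically the reduction of the kernel-truncation estimates to the three ${\rm vmo}_\nu$ conditions: in one parameter the correspondence between ``small cubes'' and condition~(i) is immediate, but rectangles of small diameter can still be eccentric, and a clean proof needs either a careful use of the mixed-scale boundedness of the bi-parameter commutator (handling the long direction by the one-parameter theory and the short direction by smallness) or a strengthened splitting that truncates each parameter's scale separately. Getting this bookkeeping right—while still landing inside the finitely-many ``good rectangle'' pieces whose commutators are compact—is the technical heart of the argument; the necessity direction, by contrast, is a fairly mechanical upgrade of the one-parameter proof once the product non-degeneracy of $\mathcal R_j^{(1)}\mathcal R_k^{(2)}$ is recorded.
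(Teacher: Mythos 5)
Your proposal follows essentially the same route as the paper, which itself only sketches this theorem by asserting that the sufficiency follows from the kernel-splitting of Theorem~\ref{t:Compact suff} (splitting each Riesz factor into four parts) and the necessity from the contradiction argument of Theorem~\ref{t:Compact nece}, omitting all details. Your write-up is in fact more explicit than the paper's, and the eccentric-rectangle subtlety you flag in the sufficiency direction is a genuine issue that the paper does not address.
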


We point out that when $ b\in vmo _{\nu } (\mathbb R^{n_1}\times \mathbb R^{n_2} )$, the compactness argument 
follows from the same idea and technique as in the proof of Theorem \ref{t:Compact suff} with the splitting of the Riesz transforms $\mathcal R_j^{(1)}$ and $\mathcal R_k^{(2)}$ into four parts respectively. The reverse argument follows directly from the approach in the proof of Theorem \ref{t:Compact nece}. For the details, we omit here.

\bigskip

\end{document}